\providecommand{\U}[1]{\protect\rule{.1in}{.1in}}
\numberwithin{equation}{section}
\providecommand{\U}[1]{\protect\rule{.1in}{.1in}}
\newtheorem{theorem} {Theorem} [section]
\newtheorem{proposition}[theorem]{Proposition}
\newtheorem{corollary}  [theorem]     {Corollary}
\newtheorem{lemma}  [theorem]     {Lemma}
\newtheorem{definition}  [theorem]     {Definition}
\newcommand{\dz}{d \bar{z}}
\newcommand{\pz}{\partial\bar{z}}
\newcommand{\w}{\wedge}
\newcommand{\1}{\mathds{1}}
\newcommand{\db}{\overline{\partial}}
\newcommand{\lc}{\lrcorner}
\newcommand{\lk}{\left(}
\newcommand{\rk}{\right)}
\newcommand{\btheorem}{\begin{theorem}}
\newcommand{\etheorem}{\end{theorem}}
\newcommand{\bproposition}{\begin{proposition}}
\newcommand{\eproposition}{\end{proposition}}
\newcommand{\bdefinition}{\begin{definition}}
\newcommand{\edefinition}{\end{definition}}
\newcommand{\bcorollary}{\begin{corollary}}
\newcommand{\ecorollary}{\end{corollary}}
\newcommand{\bproof}{\begin{proof}}
\newcommand{\eproof}{\end{proof}}
\newcommand{\beq}{\begin{equation}}
\newcommand{\eeq}{\end{equation}}
\newcommand{\ee}{\end{eqnarray*}}
\newcommand{\be}{\begin{eqnarray*}}
\newcommand{\elemma}{\end{lemma}}
\newcommand{\blemma}{\begin{lemma}}
\newcommand{\om}{\omega}
\renewcommand{\>}{\rightarrow}
\newcommand{\p}{\partial}
\newcommand{\bd}{\begin{enumerate} }
\newcommand{\ed}{\end{enumerate}}
\begin{document}
\title{Extension formulas and deformation invariance of Hodge numbers}
\author{Quanting Zhao}
\address{School of Mathematics and statistics,
Central China Normal University, Wuhan 430079, China; Center of
Mathematical Sciences, Zhejiang University, Hangzhou 310027, China}
\email{zhaoquanting@126.com}

\author{Sheng Rao}
\address{School of Mathematics and statistics, Wuhan  University,
Wuhan 430072, China; Department of Mathematics, University of
California at Los Angeles, CA 90095-1555, USA}
\email{likeanyone@whu.edu.cn; likeanyone@math.ucla.edu}
\thanks{Rao is the corresponding author.}

\date{\today}

\subjclass[2010]{Primary 32G05; Secondary 13D10, 14D15, 53C55}
\keywords{Deformations of complex structures; Deformations and
infinitesimal methods, Formal methods; deformations, Hermitian and
K\"ahlerian manifolds}

\begin{abstract}
We introduce a canonical isomorphism from the space of pure-type
complex differential forms on a compact complex manifold to the one
on its infinitesimal deformations. By use of this map, we generalize
an extension formula in a recent work of K. Liu, X. Yang and the
second author. As a direct corollary of the extension formulas, we
prove several deformation invariance theorems for Hodge numbers on
some certain classes of complex manifolds, without use of
Fr\"{o}licher inequality or the topological invariance of Betti
numbers.
\end{abstract}
\maketitle

\section{Introduction and main results}
This paper is to study the deformation invariance of Hodge numbers
and we use an iteration method to construct explicit extension of
Dolbeault cohomology classes.


Let $\pi: \mathfrak{X} \> \bigtriangleup$ be a holomorphic family of
$n$-dimensional compact complex manifolds with the central fiber
$\pi^{-1}(0) = X_0$ and its infinitesimal deformations
$\pi^{-1}(t)=X_t$, where $\bigtriangleup$ is a small disk in
$\mathbb{C}$ for simplicity. Then there exists a transversely
holomorphic trivialization $F_{\sigma}: \mathfrak{X}
\stackrel{(\sigma,\pi)}{\longrightarrow} X_0 \times \bigtriangleup$
(cf. \cite[Proposition 9.5]{V} and \cite[Appendix A]{C}), which
gives us the Kuranishi data $\varphi(t)$ (or $\varphi$), depending
holomorphically on $t$, with the integrability
\begin{equation}\label{int}
\db \varphi(t)= \frac{1}{2}[\varphi(t), \varphi(t)].
\end{equation}
Fix an open coordinate covering $\big\{ \mathfrak{U}:
(w^{\alpha}_j,t) \in U^{\alpha}, U^{\alpha} \in \mathfrak{U} \big\}$
of $\mathfrak{X}$, with a restricted covering $\big\{
\mathfrak{U}_0: z_{j}^{\alpha} \in U_0^{\alpha}:= U^{\alpha} \bigcap
X_0, U^{\alpha} \bigcap X_0 \in \mathfrak{U}_0 \big\}$ of $X_0$. As
we focus on one coordinate chart, the superscript $\alpha$ is
suppressed. As in \cite{C,LSY,lry}, the operator $e^{i_{\varphi}}$
is defined by
$$e^{i_{\varphi}}= \sum_{k=0}^{\infty}\frac{1}{k!}i^k_{\varphi},$$
where $i^k_{\varphi}$ denotes $k$ times of the contraction operator
$i_{\varphi}=\varphi \lc$ and $e^{i_{\overline{\varphi}}}$ is
similarly defined. It is known that $\{e^{i_{\varphi}}\lk dz^{i} \rk
\}_{i=1}^n$ and $ \{ \overline{ e^{i_{\varphi}}\lk dz^{i} \rk }
\}_{i=1}^n$ are the local bases of $T^{*(1,0)}_{X_t}$ and
$T^{*(0,1)}_{X_t}$, respectively. Inspired by these, we introduce:
\begin{definition}
A canonical map between $A^{p,q}(X_0)$ and $A^{p,q}(X_t)$ is defined
as:
\[ \begin{array}{cccc}
e^{i_{\varphi}|i_{\overline{\varphi}}}: & A^{p,q}(X_0) & \rightarrow & A^{p,q}(X_t) \\
& \om & \mapsto & e^{i_{\varphi}|i_{\overline{\varphi}}}
\lk \om \rk, \\
\end{array}\]
where \[ e^{i_{\varphi}|i_{\overline{\varphi}}} \lk \om \rk =
\sum_{\begin{subarray}{c} i_1,\cdots,i_p\\
j_1,\cdots,j_q \end{subarray}}
\frac{1}{p!q!}\om_{i_1,\cdots,i_p;j_1,\cdots,j_q}(z)
\left(e^{i_{\varphi}}\left(dz^{i_1}\wedge\cdots\wedge
dz^{i_p}\right)\right) \wedge
\left(e^{i_{\overline{\varphi}}}\left(d\overline{z}^{j_1}\cdots\wedge
d\overline{z}^{j_q}\right)\right)\] and $\om$ is locally written as
$\sum_{\begin{subarray}{c} i_1,\cdots,i_p\\
j_1,\cdots,j_q\\
\end{subarray}} \frac{1}{p!q!}\om_{i_1,\cdots,i_p;j_1,\cdots,j_q}(z) dz^{i_1}\w \cdots \w dz^{i_p}
\w \dz^{j_1} \w \cdots \dz^{j_q}$. \end{definition} It is easy to
check that $e^{i_{\varphi}|i_{\overline{\varphi}}}$  is independent
of the choice of local coordinates and is actually a real
isomorphism. From the explicit formula of $\varphi$ (cf. \cite[pp.
150]{MK}), a careful calculation yields: \blemma\label{dw-dz}
$$\begin{cases} dw^{\alpha} &= \frac{\p
w^{\alpha}}{\p z^i} \lk e^{i_{\varphi}} \lk dz^i \rk \rk \\
\frac{\p\ }{\p w^{\alpha}} &= \lk \lk \1 - \varphi
\overline{\varphi} \rk^{-1} \lk \frac{\p w}{\p z} \rk^{-1}
\rk^j_{\alpha} \frac{\p\ }{\p z^j} - \lk \lk \1- \overline{\varphi}
\varphi \rk^{-1} \overline{ \varphi } \lk \frac{\p w}{\p z} \rk^{-1}
\rk^{j}_{\alpha} \frac{\p\ }{\pz^j},
\end{cases}$$
where $\overline{\varphi}\varphi:=\varphi \lc \overline{\varphi}$
and $\varphi \overline{\varphi}$ is similarly defined. \elemma
\bcorollary\label{pw-pz} $\frac{\p w^{\alpha}}{\p z^i} \frac{\p\
}{\p w^{\alpha}} = \lk \lk \1 - \varphi \overline{\varphi}
\rk^{\!\!-1} \rk^j_i \frac{\p\ }{\p z^j} - \lk \lk \1-
\overline{\varphi} \varphi \rk^{\!\!-1} \overline{ \varphi }
\rk^j_{i} \frac{\p\ }{\pz^j}$. \ecorollary Then we get the following
useful local formula:
\blemma\label{dvz}
\begin{align*} d \lk e^{i_{\varphi}} \lk dz^i \rk \rk &= \lk \lk \1-
\overline{\varphi} \varphi \rk^{\!\!-1} \overline{ \varphi }
\rk^{\bar l}_{\bar k} \frac{\p \varphi^i_{\bar l}}{\p z^j} \lk
e^{i_{\varphi}} \lc dz^k \rk \wedge\lk
e^{i_{\varphi}} \lc dz^j \rk  \\
& \quad - \lk \lk \1- \overline{\varphi} \varphi \rk^{\!\!-1}
\rk^{\bar l}_{\bar k} \frac{\p \varphi^i_{\bar l}}{\p z^j} \lk
\overline{e^{i_{\varphi}} \lc dz^k} \rk \wedge\lk e^{i_{\varphi}}
\lc dz^j \rk, \end{align*} which describes the $d$-operator under
the local frames $\{ e^{i_{\varphi}} \lk dz^i \rk,
\overline{e^{i_{\varphi}} \lk dz^i \rk} \}_{i=1}^n$. \elemma
 Using these,
one has: \bproposition\label{f-extension} Let $f$ be a smooth
function on $X_0$. Then \[ df=
e^{i_{\varphi}|i_{\overline{\varphi}}} \Big( \lk \1-\varphi
\overline{\varphi}\rk^{\!\!-1}\lc(\p- \overline{\varphi}\lc \db)f +
\lk \1-\overline{\varphi}\varphi\rk^{\!\!-1}\lc(\db-\varphi\lc \p)f
\Big). \] \eproposition Since $df$ can be decomposed into $\p_t f +
\db_t f$ on $X_t$, $\db_t f =
e^{i_{\varphi}|i_{\overline{\varphi}}}\Big( \lk
\1-\overline{\varphi}\varphi\rk^{\!\!-1}\lc(\db-\varphi\lc \p)f
\Big)$. Thus $f$ is holomorphic with respect to the complex
structure of $X_t$ if and only if $$(\db-\varphi\lc \p)f=0,$$ by the
invertibility of $(\1-\overline{\varphi}\varphi)^{\!\!-1}\lc$.
Hence, we reprove this  important criterion (cf. \cite{nn} and also
\cite[pp. 151-152]{MK}) in the deformation theory.

Then we get two extension formulas on $(p,0)$ and $(0,q)$-forms.%
\begin{proposition}\label{p0-extension}
For $\omega\in A^{p,0}(X_0)$,
\begin{align*}
   d(e^{i_{\varphi}|i_{\overline{\varphi}}} \lk \omega \rk )
 &=e^{i_{\varphi}|i_{\overline{\varphi}}}\Big(\lk \1 - \overline{\varphi} \varphi \rk^{\!\!-1}\lc(\db\om+\p(\varphi\lc\omega)-\varphi\lc \p\omega)\\
   & \quad \quad \quad \quad +\lk \1 - \varphi \overline{\varphi} \rk^{\!\!-1}\lc\p \omega-p\p
 \omega-(\lk \1 - \varphi \overline{\varphi}
 \rk^{\!\!-1}\lc\overline{\varphi})\lc(\db  \omega+\p
 (\varphi\lc\omega))\Big).\\
\end{align*}
\end{proposition}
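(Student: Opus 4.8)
The plan is to pull every term under the real isomorphism $e^{i_{\varphi}|i_{\overline{\varphi}}}$ and reduce the proposition to a purely local identity on $X_0$, which is then checked by index bookkeeping. I would first record two structural facts about $e^{i_{\varphi}|i_{\overline{\varphi}}}$: it is a homomorphism for the wedge product (immediate from the definition, since $e^{i_{\varphi}}$ and $e^{i_{\overline{\varphi}}}$ are exponentials of the degree-zero derivations $i_{\varphi}$, $i_{\overline{\varphi}}$ and the construction respects the bidegree splitting), and it fixes $0$-forms. Hence, writing $\om=\frac{1}{p!}\om_{i_1\cdots i_p}(z)\,dz^{i_1}\wedge\cdots\wedge dz^{i_p}$ locally, $e^{i_{\varphi}|i_{\overline{\varphi}}}(\om)=\frac{1}{p!}\om_{i_1\cdots i_p}\,e^{i_{\varphi}}(dz^{i_1})\wedge\cdots\wedge e^{i_{\varphi}}(dz^{i_p})$. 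Applying the Leibniz rule, $d(e^{i_{\varphi}|i_{\overline{\varphi}}}(\om))$ splits into the term where $d$ hits $\om_{i_1\cdots i_p}$ and the $p$ terms (with signs $(-1)^{k-1}$) where $d$ hits $e^{i_{\varphi}}(dz^{i_k})$.

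For the first branch, Proposition \ref{f-extension} applied to $f=\om_{i_1\cdots i_p}$ writes $d\om_{i_1\cdots i_p}$ as $e^{i_{\varphi}|i_{\overline{\varphi}}}$ of an explicit $(1,0)+(0,1)$-form, so by the homomorphism property $d\om_{i_1\cdots i_p}\wedge e^{i_{\varphi}}(dz^{i_1})\wedge\cdots\wedge e^{i_{\varphi}}(dz^{i_p})$ becomes $e^{i_{\varphi}|i_{\overline{\varphi}}}$ of an explicit local form on $X_0$. For the second branch, Lemma \ref{dvz} already presents $d(e^{i_{\varphi}}(dz^{i_k}))$ in the moving frame $\{e^{i_{\varphi}}(dz^i),\overline{e^{i_{\varphi}}(dz^i)}\}$, hence as $e^{i_{\varphi}|i_{\overline{\varphi}}}$ of an explicit $(2,0)+(1,1)$-form on $X_0$; wedging back the remaining frame $1$-forms and using the homomorphism property once more, the whole Leibniz sum takes the shape $e^{i_{\varphi}|i_{\overline{\varphi}}}(\Theta)$ for an explicit local expression $\Theta$. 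Since $e^{i_{\varphi}|i_{\overline{\varphi}}}$ is injective, the proposition reduces to the identity $\Theta=(\1-\overline{\varphi}\varphi)^{-1}\lc(\db\om+\p(\varphi\lc\om)-\varphi\lc\p\om)+(\1-\varphi\overline{\varphi})^{-1}\lc\p\om-p\,\p\om-\big((\1-\varphi\overline{\varphi})^{-1}\lc\overline{\varphi}\big)\lc(\db\om+\p(\varphi\lc\om))$.

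The remaining work, and the main obstacle, is the index computation identifying $\Theta$ with this right-hand side. Its ingredients are: (i) the Leibniz identity $\p_j(\varphi^i_{\bar l}\om_{i\cdots})=(\p_j\varphi^i_{\bar l})\om_{i\cdots}+\varphi^i_{\bar l}\p_j\om_{i\cdots}$, which, once the $\p\varphi^{i_k}_{\bar l}/\p z^j$ factors from Lemma \ref{dvz} are summed against $\om_{i_1\cdots i_p}$ over $k$, converts them into $\p(\varphi\lc\om)-\varphi\lc\p\om$; (ii) the fact that the ``$\1$-parts'' of $(\1-\varphi\overline{\varphi})^{-1}$, resp.\ $(\1-\overline{\varphi}\varphi)^{-1}$, contribute the same term in each of the $p$ Leibniz summands, producing exactly $-p\,\p\om$; and (iii) geometric-series identities such as $\varphi\overline{\varphi}\,(\1-\varphi\overline{\varphi})^{-1}=(\1-\varphi\overline{\varphi})^{-1}-\1$, used to absorb cross terms. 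The delicate part is bookkeeping the signs — those from the Leibniz rule and those from commuting type-$(1,0)$ $1$-forms past type-$(0,1)$ ones — and reorganizing the $p$ Leibniz summands so the $\p\varphi$-terms coalesce into $\p(\varphi\lc\om)$ while the trivial ones coalesce into $-p\,\p\om$; one must also check that every quadratic-in-$\varphi$ term is absorbed by the three matrix factors appearing in the formula. Finally, note that the integrability condition \eqref{int} is not needed here: $e^{i_{\varphi}|i_{\overline{\varphi}}}(\om)$ is just a differential form on $X_0$ and $d$ its de Rham differential, so the asserted formula is a formal identity in $\varphi$ (integrability enters only afterward, when $d$ is split as $\p_t+\db_t$ on $X_t$).
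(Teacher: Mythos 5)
Your framework is the right one and matches the route the paper sets up: since $e^{i_{\varphi}|i_{\overline{\varphi}}}$ respects wedge products and fixes functions, the Leibniz rule splits $d(e^{i_{\varphi}|i_{\overline{\varphi}}}(\om))$ into a coefficient branch governed by Proposition \ref{f-extension} and a frame branch governed by Lemma \ref{dvz}, and injectivity of $e^{i_{\varphi}|i_{\overline{\varphi}}}$ reduces the claim to a local identity $\Theta=\mathrm{RHS}$. The problem is that your argument stops exactly where the content of the proposition begins: the verification that $\Theta$ equals the displayed right-hand side is only described, never carried out. Your items (i)--(iii) name the correct mechanisms, but the statement that ``every quadratic-in-$\varphi$ term is absorbed by the three matrix factors'' is the assertion to be proved, not an observation: the matrices $(\1-\varphi\overline{\varphi})^{-1}$ and $(\1-\overline{\varphi}\varphi)^{-1}$ enter Lemma \ref{dvz} and Proposition \ref{f-extension} contracted on different indices, and showing that after the $p$ Leibniz summands are reorganized they assemble into exactly the three contractions of the statement --- with $\overline{\varphi}\lc$ landing outside the combination $\db\om+\p(\varphi\lc\om)$ and with no leftover commutator terms --- is the entire computation. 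As written, the proof has a genuine gap at its core.

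There is also an error in your closing remark: the integrability condition \eqref{int} is needed for Proposition \ref{p0-extension} itself, not only for the subsequent splitting $d=\p_t+\db_t$ on $X_t$. It is already consumed by Lemma \ref{dvz}, which you invoke: for a non-integrable $\varphi$ the expansion of $d(e^{i_{\varphi}}(dz^i))=d(\varphi^i_{\bar{j}}\,d\overline{z}^j)$ in the frame $\{e^{i_{\varphi}}(dz^k),\overline{e^{i_{\varphi}}(dz^k)}\}$ has a nonvanishing $\overline{e^{i_{\varphi}}(dz^k)}\wedge\overline{e^{i_{\varphi}}(dz^j)}$ component, and it drops out precisely because $\db\varphi=\frac{1}{2}[\varphi,\varphi]$ allows one to trade $\db\varphi$ for terms quadratic in $\varphi$ and $\p\varphi$; correspondingly, nothing on the right-hand side of the proposition can produce a term linear in $\db\varphi$. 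Concretely, for $\om\in A^{1,0}(X_0)$ the $(0,2)$-component of the left-hand side is $\db(\varphi\lc\om)$, which equals $\varphi\lc\db\om$ plus a term linear in $\db\varphi$, while to first order in $\varphi$ the $(0,2)$-component of the right-hand side is only $\varphi\lc\db\om$, coming from $e^{i_{\varphi}|i_{\overline{\varphi}}}(\db\om)$; the two agree only when $\db\varphi_1=0$. So the formula is not a formal identity in an arbitrary $\varphi$, and a bookkeeping carried out under that assumption will not close up.
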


\begin{corollary}\label{0q-extension}
For $\omega\in A^{0,q}(X_0)$,
\begin{align*}
   d(e^{i_{\varphi}|i_{\overline{\varphi}}}\lk \omega \rk )=&e^{i_{\varphi}|i_{\overline{\varphi}}}\Big(\lk \1 - \varphi\overline{\varphi} \rk^{\!\!-1}\lc(\p\om+\db(\overline{\varphi}\lc\omega)-\overline{\varphi}\lc \db\omega)\\
   & \quad  \quad \quad+\lk \1 - \overline{\varphi}\varphi
   \rk^{\!\!-1}\lc\db\omega-q\db
 \omega-(\lk \1 -\overline{\varphi} \varphi
 \rk^{\!\!-1}\lc{\varphi})\lc(\p  \omega+\db
 (\overline{\varphi}\lc\omega))\Big).
\end{align*}
\end{corollary}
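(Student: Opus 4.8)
The plan is to obtain this corollary from Proposition \ref{p0-extension} by complex conjugation, using two features: the exterior derivative $d$ is real, and the canonical map $e^{i_{\varphi}|i_{\overline{\varphi}}}$ intertwines complex conjugation on $A^{\bullet,\bullet}(X_0)$ with complex conjugation on $A^{\bullet,\bullet}(X_t)$. The latter is the content of the remark, just after the definition, that $e^{i_{\varphi}|i_{\overline{\varphi}}}$ is a \emph{real} isomorphism: since $i_{\varphi}\xi=\varphi\lc\xi$ satisfies $\overline{i_{\varphi}\xi}=\overline{\varphi}\lc\overline{\xi}=i_{\overline{\varphi}}\overline{\xi}$, one has $\overline{e^{i_{\varphi}}\lk dz^{i}\rk}=e^{i_{\overline{\varphi}}}\lk d\overline{z}^{i}\rk$ and $\overline{e^{i_{\overline{\varphi}}}\lk d\overline{z}^{j}\rk}=e^{i_{\varphi}}\lk dz^{j}\rk$, so that $\overline{e^{i_{\varphi}|i_{\overline{\varphi}}}(\eta)}=e^{i_{\varphi}|i_{\overline{\varphi}}}(\overline{\eta})$ for every pure-type form $\eta$.

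First I would record the conjugation dictionary for the operators appearing in Proposition \ref{p0-extension}: $\overline{\p\eta}=\db\overline{\eta}$ and $\overline{\db\eta}=\p\overline{\eta}$; $\overline{\varphi\lc\eta}=\overline{\varphi}\lc\overline{\eta}$; $\overline{\varphi\overline{\varphi}}=\overline{\varphi}\varphi$ (immediate from the definition $\overline{\varphi}\varphi=\varphi\lc\overline{\varphi}$ and compatibility of conjugation with $\lc$); and hence $\overline{\lk\1-\varphi\overline{\varphi}\rk^{\!\!-1}\lc\;}=\lk\1-\overline{\varphi}\varphi\rk^{\!\!-1}\lc\;$ together with its mirror statement. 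In short, conjugation swaps $\varphi\leftrightarrow\overline{\varphi}$ and $\p\leftrightarrow\db$ throughout, and in particular interchanges the two inverse operators $\lk\1-\varphi\overline{\varphi}\rk^{\!\!-1}\lc$ and $\lk\1-\overline{\varphi}\varphi\rk^{\!\!-1}\lc$.

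Then I would carry out the substitution: given $\om\in A^{0,q}(X_0)$, apply Proposition \ref{p0-extension} to $\overline{\om}\in A^{q,0}(X_0)$, i.e. with $p$ replaced by $q$, and take complex conjugates of both sides of the resulting identity. On the left, $\overline{d\lk e^{i_{\varphi}|i_{\overline{\varphi}}}(\overline{\om})\rk}=d\lk\overline{e^{i_{\varphi}|i_{\overline{\varphi}}}(\overline{\om})}\rk=d\lk e^{i_{\varphi}|i_{\overline{\varphi}}}(\om)\rk$; on the right the dictionary above transforms the bracketed expression term by term (with the term $-q\p\overline{\om}$ becoming $-q\db\om$) precisely into the bracket displayed in Corollary \ref{0q-extension}. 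Comparing the two sides finishes the argument.

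I expect the only delicate points — none of them genuinely hard — to be: (i) verifying $\overline{e^{i_{\varphi}|i_{\overline{\varphi}}}(\eta)}=e^{i_{\varphi}|i_{\overline{\varphi}}}(\overline{\eta})$ with correct sign bookkeeping, since passing from a $(p,q)$-form to its conjugate reorders wedge factors and introduces a $(-1)^{pq}$ that must cancel against the analogous reordering of $\lk e^{i_{\varphi}}(dz^{J})\rk\w\lk e^{i_{\overline{\varphi}}}(d\overline{z}^{I})\rk$; and (ii) checking that conjugation really does intertwine $\lk\1-\varphi\overline{\varphi}\rk^{\!\!-1}\lc$ with $\lk\1-\overline{\varphi}\varphi\rk^{\!\!-1}\lc$, which is exactly what makes the $\varphi$'s and $\overline{\varphi}$'s swap in the two inverse operators of the statement. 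As an alternative, one could prove Corollary \ref{0q-extension} directly by mimicking the derivation of Proposition \ref{p0-extension}, expanding $d\lk e^{i_{\varphi}|i_{\overline{\varphi}}}(\om)\rk$ via Lemma \ref{dvz}; but the conjugation route is far shorter and explains why this is packaged as a corollary.
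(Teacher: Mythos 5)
Your proposal is correct and follows exactly the route the paper intends: the statement is packaged as a corollary of Proposition \ref{p0-extension} precisely because it is its complex conjugate (apply the proposition to $\overline{\om}\in A^{q,0}(X_0)$ and conjugate, using that $d$ is real, that conjugation swaps $\p\leftrightarrow\db$, $\varphi\leftrightarrow\overline{\varphi}$ and hence $\lk\1-\varphi\overline{\varphi}\rk^{\!\!-1}\lc\leftrightarrow\lk\1-\overline{\varphi}\varphi\rk^{\!\!-1}\lc$, and that $e^{i_{\varphi}|i_{\overline{\varphi}}}$ is a real map). Your sign check for $\overline{e^{i_{\varphi}|i_{\overline{\varphi}}}(\eta)}=e^{i_{\varphi}|i_{\overline{\varphi}}}(\overline{\eta})$ is the one point worth recording explicitly, and you have it right.
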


Based on these two, we use the iteration method, initiated by
\cite{LSY} and developed in \cite{s,sy,lry,lzr,Rz}, to achieve two
theorems on deformation invariance of Hodge numbers, by constructing
explicit extension, without use of Fr\"{o}licher inequality or the
topological invariance of Betti numbers (cf. \cite[Section 5.1]{gr}
and \cite[Section 9.3.2]{V}). We need:

\begin{definition} Define a complex manifold
$X \in \mathcal{E}^{p,q}$, $\mathfrak{D}^{p,q}$ and
$\mathfrak{B}^{p,q}$, if for any $\db$-closed $\p g\in A^{p,q}(X)$,
the equation \[ \db x=\p g \] has a solution, a $\p$-closed solution
and a $\p$-exact solution, respectively. It is obvious that
$\mathfrak{B}^{p,q}\subset\mathfrak{D}^{p,q}\subset\mathcal{E}^{p,q}$
and that $X$, satisfying the $\p\db$-lemma, lies in
$\mathfrak{B}^{p,q}$.
\end{definition}
Set $h^{p,q}_t=dim_{\mathbb{C}}H^{p,q}(X_t, \mathbb{C})$. Then:
\begin{theorem}\label{p0} For $1 \leq p \leq n$ and $X_0 \in \mathfrak{D}^{p,1}\cap
\mathcal{E}^{p+1,0}$, $h^{p,0}_t$  are independent of $t$.
\end{theorem}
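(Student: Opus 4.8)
The plan is to show that the dimension $h^{p,0}_t$ is both upper and lower semicontinuous in $t$, hence locally constant. Upper semicontinuity is classical (it follows from the general theory of elliptic operators varying holomorphically, as in Kodaira--Spirallo), so the real content is the lower bound $h^{p,0}_t \ge h^{p,0}_0$, which I will obtain by constructing, for each class in $H^{p,0}(X_0)$, an explicit holomorphic extension to a class in $H^{p,0}(X_t)$ depending on $t$ and varying in such a way that linear independence is preserved for small $t$.

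\smallskip

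Concretely, fix a basis $\om^{(1)}_0, \dots, \om^{(m)}_0$ of $H^{p,0}(X_0) = \{\om \in A^{p,0}(X_0) : \db \om = 0\}$ (note a holomorphic $p$-form is automatically $d$-closed, so this is also $\ker \db \cap \ker \p$ on $A^{p,0}$; since $p$-forms of bidegree $(p,0)$ that are $\db$-closed are holomorphic, $\p\om_0 = 0$ too). For a given $\om_0 \in H^{p,0}(X_0)$ I will seek an extension of the form
\[
\om_t = e^{i_{\varphi}|i_{\overline{\varphi}}}\lk \om_0 + \sum_{k\ge 1} \om_k \rk,
\]
where $\om_k \in A^{p,0}(X_0)$ is a formal power series term of order $k$ in $t$, $\bar t$, chosen so that $\om_t$ is a holomorphic $p$-form on $X_t$, i.e. $d\om_t = 0$ when read through the frames $\{e^{i_{\varphi}}(dz^i), \overline{e^{i_{\varphi}}(dz^i)}\}$ --- equivalently, by Proposition~\ref{p0-extension} applied to $\om_0 + \sum_k \om_k$, the bracket on the right-hand side must vanish. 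Splitting that bracket by its order in $(\varphi, \overline{\varphi})$ and isolating the part that would be the obstruction $\db \om_k$, one arrives at an iterative system
\[
\db \om_k = \p g_k,
\]
where $g_k \in A^{p,0}(X_0)$ is a universal polynomial expression in $\varphi, \overline{\varphi}$ and $\om_0, \dots, \om_{k-1}$ and their derivatives, and where (using that $\om_0$ is holomorphic and inductively that each previous obstruction was killed) one checks $\db(\p g_k)=0$, so that the hypothesis $X_0 \in \mathcal{E}^{p+1,0}$ applied at $k=1$ — and $X_0 \in \mathfrak{D}^{p,1}$ for $k \ge 2$, which guarantees not just solvability but a $\p$-closed solution so the recursion can continue — lets us solve for $\om_k$ at each stage. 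The $\mathfrak{D}^{p,1}$ condition is exactly what is needed to keep $\p(\om_0 + \sum \om_k)$ under control in the next iteration step; the $\mathcal{E}^{p+1,0}$ condition handles the very first ($d$-exact, degree-one in $\varphi$) obstruction $\p(\varphi \lc \om_0)$, which lives in $A^{p+1,0}$ after being hit by $\p$. Convergence of $\sum_k \om_k$ in a suitable Hölder norm $C^{k,\alpha}$ for $|t|$ small follows from the standard elliptic estimate for the solution operator (the Green operator for $\db$) together with the a priori control on $\|\varphi(t)\|$, exactly as in the iteration arguments of \cite{LSY, lry}; I would quote those estimates rather than reprove them.

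\smallskip

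Once each basis element $\om^{(i)}_0$ extends to a holomorphic $p$-form $\om^{(i)}_t$ on $X_t$ with $\om^{(i)}_t \to \om^{(i)}_0$ as $t \to 0$, the map $i \mapsto [\om^{(i)}_t] \in H^{p,0}(X_t)$ has, at $t=0$, the linearly independent image $\{[\om^{(i)}_0]\}$; since $h^{p,0}_t$ is upper semicontinuous, and since linear independence is an open condition (the relevant Gram-type determinant is continuous in $t$), the extensions stay independent for small $t$, giving $h^{p,0}_t \ge m = h^{p,0}_0$. Combined with $h^{p,0}_t \le h^{p,0}_0$ from upper semicontinuity, this forces equality, and since $0 \in \bigtriangleup$ was an arbitrary base point the conclusion holds on all of $\bigtriangleup$ (shrinking if necessary and using connectedness).

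\smallskip

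I expect the main obstacle to be bookkeeping in the iteration: verifying that the obstruction $\p g_k$ at each order is genuinely $\db$-closed so that the hypotheses $\mathfrak{D}^{p,1}$ and $\mathcal{E}^{p+1,0}$ apply, and confirming that choosing a $\p$-closed solution $\om_k$ (as $\mathfrak{D}^{p,1}$ permits) is precisely what makes the $(k+1)$-st obstruction again of the form $\p(\text{something})$ rather than a general $\db$-closed form — i.e. that the recursion is self-sustaining. The other delicate point, though routine given the cited references, is bounding the constants in the elliptic estimates uniformly in $t$ so that the power series in $t, \bar t$ actually converges; I would handle this exactly as in \cite{LSY} and \cite{lry} by working in a fixed finite-dimensional-obstruction setting and absorbing the combinatorial factors into the radius of convergence.
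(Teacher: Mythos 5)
Your overall architecture---Kodaira--Spencer upper semicontinuity plus an order-by-order holomorphic extension $e^{i_{\varphi}}\lk \om_0+\sum_k\om_k\rk$ driven by Proposition \ref{p0-extension}---is the same as the paper's (which runs the iteration through Proposition \ref{thmp0}). But there is a genuine error at the base of your induction: you assert that a holomorphic $p$-form on $X_0$ is automatically $d$-closed. This is false on a general compact complex manifold (on the Iwasawa manifold, $dz^3-z^1dz^2$ is holomorphic with $\p(dz^3-z^1dz^2)\neq 0$), and the hypothesis $X_0\in\mathcal{E}^{p+1,0}$ exists precisely to repair it: $\p\sigma_0\in A^{p+1,0}$ is $\db$-closed because $\db\sigma_0=0$, so $\mathcal{E}^{p+1,0}$ makes $\db x=\p\sigma_0$ solvable, and since $x$ would have to lie in $A^{p+1,-1}=0$ this forces $\p\sigma_0=0$. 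You instead deploy $\mathcal{E}^{p+1,0}$ at order $k=1$ against the obstruction $\p(\varphi_1\lc\om_0)$, which you place in $A^{p+1,0}$; in fact $\varphi_1\lc\om_0\in A^{p-1,1}$, so $\p(\varphi_1\lc\om_0)\in A^{p,1}$, and the $k=1$ step---like every step $k\geq 1$---is handled by $\mathfrak{D}^{p,1}$, not by $\mathcal{E}^{p+1,0}$.

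The step you defer as ``bookkeeping,'' namely that each obstruction $\eta_k=-\p\lk\sum_{i=1}^k\varphi_i\lc\sigma_{k-i}\rk$ is $\db$-closed, is the actual mathematical content of the induction and is not automatic: the paper proves it by combining the integrability $\db\varphi=\tfrac12[\varphi,\varphi]$ with the commutator identity $[\phi,\psi]\lc\alpha=-\p(\psi\lc(\phi\lc\alpha))-\psi\lc(\phi\lc\p\alpha)+\phi\lc\p(\psi\lc\alpha)+\psi\lc\p(\phi\lc\alpha)$, and the resulting cancellation uses exactly the inductively known identities $\p\sigma_j=0$ for $j\leq k-1$---which is why $\mathfrak{D}^{p,1}$ (a $\p$-closed solution, not mere solvability) is required at every stage, starting from $\p\sigma_0=0$ above. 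Without carrying out this computation the recursion is not shown to be self-sustaining. A smaller divergence: the paper avoids your H\"older-norm convergence discussion entirely by invoking Grauert's formal function theorem, so only the formal construction is needed; your elliptic-estimate route would additionally require a bounded solution operator producing $\p$-closed solutions, which the abstract hypothesis $\mathfrak{D}^{p,1}$ does not by itself provide.
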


\begin{theorem}\label{0q}
For $1\leq q\leq n$ and $X_0 \in
\mathfrak{B}^{1,q'}\cap\mathcal{E}^{q',0}\cap\mathfrak{D}^{q',1}$
with all $1\leq q'\leq q$, $h^{0,q}_t$  are independent of $t$.
\end{theorem}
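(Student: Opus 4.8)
The strategy is to combine the elementary upper semi-continuity of $t\mapsto h^{0,q}_t$ with an explicit iterative extension, built out of Corollary~\ref{0q-extension}, of a harmonic basis of $H^{0,q}(X_0)$. Fix a smooth family of Hermitian metrics on the fibres. Since the $\db_t$-Laplacians on $A^{0,q}(X_t)$ form a smooth family of elliptic operators, $h^{0,q}_t=\dim\ker\square_t$ is upper semi-continuous, so $h^{0,q}_t\le h^{0,q}_0$ for $t$ near $0$; it therefore suffices to produce an injective linear map $\mathcal H^{0,q}_{\db}(X_0)\to H^{0,q}(X_t)$ for such $t$, because an injection from a space of dimension $h^{0,q}_0$ into one of dimension $\le h^{0,q}_0$ forces equality. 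For $\om_0\in\mathcal H^{0,q}_{\db}(X_0)$ I would look for its extension in the shape $\om_t=e^{i_{\varphi}|i_{\overline{\varphi}}}(\om(t))$ with $\om(t)=\om_0+\sum_{N\ge1}\om_{(N)}$ a power series in $t,\bar t$ whose coefficients $\om_{(N)}\in A^{0,q}(X_0)$ are homogeneous of total degree $N$.

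Since $e^{i_{\varphi}|i_{\overline{\varphi}}}$ preserves bidegree, $\db_t\om_t$ is $e^{i_{\varphi}|i_{\overline{\varphi}}}$ applied to the $(0,q+1)$-component on $X_0$ of the bracket in Corollary~\ref{0q-extension}. Expanding $(\1-\varphi\overline{\varphi})^{-1}$ and $(\1-\overline{\varphi}\varphi)^{-1}$ as geometric series and sorting by total degree in $(t,\bar t)$ (recall $\varphi=\varphi(t)=O(t)$), the condition $\db_t\om_t=0$ becomes a recursive system
\[
\db\,\om_{(N)}=E_{(N)}\big(\varphi,\overline{\varphi};\om_0,\dots,\om_{(N-1)}\big),\qquad N\ge1,
\]
where the order-zero equation is simply $\db\om_0=0$ and $E_{(N)}$ is a universal expression, linear in $\om_0,\dots,\om_{(N-1)}$, assembled from $\varphi$, $\overline{\varphi}$ and from $\p$, $\db$ applied to the lower-order terms. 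That each $E_{(N)}$ is $\db$-closed on $X_0$ I would deduce from $\db_t^2=0$ applied to the partial extension already constructed together with the integrability \eqref{int}: once $\om(t)$ solves the system through order $N-1$, the order-$N$ obstruction $E_{(N)}$ is the $t=0$ reduction of a $\db_t$-closed form.

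The real work is to solve $\db\om_{(N)}=E_{(N)}$ while choosing $\om_{(N)}$ so that the recursion continues, and this is exactly where $X_0\in\mathfrak B^{1,q'}\cap\mathcal E^{q',0}\cap\mathfrak D^{q',1}$ for all $q'\le q$ enters. A $\db$-closed form is not $\db$-exact for free; but unwinding the bracket of Corollary~\ref{0q-extension} exhibits $E_{(N)}$ --- after the previously constructed $\om_{(k)}$ have been taken $\p$-closed, resp.\ $\p$-exact, where needed so as to annihilate the $\p\om_{(k)}$ and $\p(\overline{\varphi}\lc\om_{(k)})$ contributions --- as a $\db$-closed expression of precisely the $\p$-exact / $\db$-closed-with-$\p$-data shape covered by the defining properties of $\mathfrak B^{1,q'}$, $\mathcal E^{q',0}$ and $\mathfrak D^{q',1}$, with $q'$ ranging over the values $\le q$ produced by the contractions with $\varphi,\overline{\varphi}$; one then takes $\om_{(N)}$ again $\p$-closed (or $\p$-exact), propagating all three properties to the next step. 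Solving each stage with the Green operator of the fixed metric on $X_0$ keeps $\om(t)$ real analytic in $(t,\bar t)$, and its convergence for $|t|$ small follows from the standard elliptic estimate for $\dbs G$ together with the smallness of $\varphi(t)$. This simultaneous induction --- on $N$ and on the bidegrees generated by Corollary~\ref{0q-extension}, matching each term of $E_{(N)}$ to the correct one of the three solvability hypotheses --- is the step I expect to be the main obstacle.

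Finally, injectivity. Choosing the correction terms $\om_{(N)}$ ($N\ge1$) orthogonal to $\mathcal H^{0,q}_{\db}(X_0)$ makes the $X_0$-harmonic part of $\om_t$ equal to $\om_0$. If $\{\om_i\}$ is a $\db_0$-harmonic basis of $\mathcal H^{0,q}_{\db}(X_0)$ with extensions $\om_{i,t}$ and $\sum_i c_i[\om_{i,t}]=0$ in $H^{0,q}(X_t)$, then transporting this exactness back through $e^{i_{\varphi}|i_{\overline{\varphi}}}$ via the analogue of Corollary~\ref{0q-extension} on $(0,q-1)$-forms and reading off the $t=0$ term forces $\sum_i c_i\om_i$ to be $\db$-exact on $X_0$, hence zero, so all $c_i=0$ (alternatively one appeals to the semi-continuity-and-closeness argument of \cite{LSY,lry}). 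Together with the first step this yields $h^{0,q}_t=h^{0,q}_0$ for $|t|$ small, and after shrinking $\bigtriangleup$, for every $t\in\bigtriangleup$.
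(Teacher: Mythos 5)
Your overall skeleton (upper semi-continuity plus an injective extension map $H^{0,q}(X_0)\to H^{0,q}(X_t)$) matches the paper, but the two steps that actually carry the proof are left open or are flawed, and in both places the fix is a specific idea you are missing. First, the construction. You start from a $\db$-harmonic representative and note that the order-zero equation is ``simply $\db\om_0=0$''; you then defer the whole induction as ``the main obstacle''. The paper does not try to solve the single equation $\db_t\om_t=0$ directly; it imposes the stronger sufficient system $\db\sigma_t=0$ and $\p\sigma_t+\db(\overline{\varphi(t)}\lc\sigma_t)=0$, whose order-zero part is $d\sigma_0=0$ --- and a harmonic $(0,q)$-form is in general not $\p$-closed, so your recursion cannot even be initialized from a harmonic basis. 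The missing ingredient is Lemma \ref{lemma1} (Popovici): on $X_0\in\mathfrak{B}^{1,q'}$ every Dolbeault class of type $(0,q')$ admits a $d$-closed representative $\gamma_{\alpha}$, unique by Lemma \ref{lemma2} (which uses $\mathcal{E}^{q',0}$); this is the true role of the hypothesis $\mathfrak{B}^{1,q'}$. Once one starts from $\gamma_{\alpha}$ and takes the ansatz as a power series in $\bar t$ alone, the system \eqref{holt-2} is the complex conjugate of the holomorphic-extension system \eqref{holt-1} for $(q',0)$-forms and is solved verbatim by the argument of Proposition \ref{thmp0}, using $\mathfrak{D}^{q',1}$ and the commutator formula for $[\phi,\psi]\lc\alpha$. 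Your plan to match each term of $E_{(N)}$ ``to the correct one of the three solvability hypotheses'' across several bidegrees is exactly the part that this reduction renders unnecessary, and as written it is not a proof.

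Second, injectivity. Your primary argument --- transport $\sum_i c_i\om_{i,t}=\db_t\beta_t$ back through $e^{i_{\varphi}|i_{\overline{\varphi}}}$ and ``read off the $t=0$ term'' --- does not work, because $\beta_t$ is an arbitrary $(0,q-1)$-form on $X_t$ with no a priori boundedness or continuity in $t$, so there is no $t=0$ term to read off; choosing the corrections orthogonal to the harmonic space does not help here. The paper's fix is an induction on $q$: the correspondence $[\alpha]\mapsto[e^{i_{\overline{\varphi}}}(\gamma_{\alpha}(t))]$ is injective once one knows $h^{0,q-1}_t=h^{0,q-1}_0$, and this is precisely why the theorem assumes $X_0\in\mathfrak{B}^{1,q'}\cap\mathcal{E}^{q',0}\cap\mathfrak{D}^{q',1}$ for all $1\leq q'\leq q$ rather than only for $q'=q$. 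Your reading of the ``all $q'\leq q$'' hypothesis as being forced by the bidegrees generated by the contractions with $\varphi,\overline{\varphi}$ is a misdiagnosis: within a single extension only the one value $q'$ is used.
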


By Theorem \ref{p0} and the standard Hodge theory on compact complex
surfaces (such as Section IV.2 of \cite{ccs}), we obtain:
\begin{corollary}
All the Hodge numbers of a compact complex surface are infinitesimal
deformation invariant.
\end{corollary}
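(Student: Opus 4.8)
The plan is to read off all nine Hodge numbers $h^{p,q}_t$ ($0\le p,q\le 2$) from Theorem \ref{p0} together with the classical Hodge theory of compact complex surfaces — Serre duality, degeneration of the Fr\"olicher spectral sequence at $E_1$, and topological invariance of the Betti numbers (see \cite[Section IV.2]{ccs}). Since $h^{p,q}_t=h^{2-p,2-q}_t$ by Serre duality and $h^{0,0}_t=1$, it is enough to show that $h^{1,0}_t$ and $h^{2,0}_t$ are independent of $t$ and then to pin down $h^{0,1}_t$ and $h^{1,1}_t$; the remaining numbers $h^{0,2}_t=h^{2,0}_t$, $h^{1,2}_t=h^{1,0}_t$, $h^{2,1}_t=h^{0,1}_t$, $h^{2,2}_t=1$ come along for free.

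To obtain $h^{1,0}_t$ and $h^{2,0}_t$ I would apply Theorem \ref{p0} with $p=1$ and $p=2$, which forces me to verify that every compact complex surface $X_0$ lies in $\mathfrak{D}^{1,1}\cap\mathcal{E}^{2,0}$ and in $\mathfrak{D}^{2,1}\cap\mathcal{E}^{3,0}$. Here $\mathcal{E}^{3,0}$ is vacuous since $A^{3,0}(X_0)=0$. For $\mathcal{E}^{2,0}$: if $g\in A^{1,0}(X_0)$ and $\db\p g=0$, then $\p\db\bar g=\overline{\db\p g}=0$, so $\p g\wedge\overline{\p g}=\p g\wedge\db\bar g=\p(g\wedge\db\bar g)$; as $g\wedge\db\bar g\in A^{1,2}(X_0)$ has $\db$-image in $A^{1,3}(X_0)=0$, this $(2,2)$-form equals $d(g\wedge\db\bar g)$, whence $\int_{X_0}\p g\wedge\overline{\p g}=0$ by Stokes and therefore $\p g=0$. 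Moreover, on a surface the $\p$-closedness of a solution is automatic: if $\db x=\p g$ with $x\in A^{1,0}(X_0)$, then $\p x\in A^{2,0}(X_0)$ is $\db$-closed, hence $\p x=0$ by $\mathcal{E}^{2,0}$; and for $x\in A^{2,0}(X_0)$ one has $\p x\in A^{3,0}(X_0)=0$ directly. Thus $X_0\in\mathfrak{D}^{1,1}$ (resp. $\mathfrak{D}^{2,1}$) reduces to the mere solvability of $\db x=\p g$, i.e. to the assertion that every $\db$-closed $\p$-exact $(1,1)$- or $(2,1)$-form on a compact complex surface is $\db$-exact.

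That last assertion is the one genuinely nontrivial point, and it is where I expect the real difficulty to lie: it is a weak form of the $\p\db$-lemma, restricted to bidegrees $(1,1)$ and $(2,1)$, and it does not follow from $E_1$-degeneration by a formal integration-by-parts. I would prove it by invoking the explicit description of the Dolbeault cohomology of compact complex surfaces and the structure theory behind \cite[Section IV.2]{ccs}. (Alternatively one can sidestep $\mathfrak{D}^{1,1}$ and the $h^{1,0}$-step altogether: deduce $h^{0,2}_t=h^{2,0}_t$ from Theorem \ref{p0} with $p=2$, then $h^{0,1}_t$ from the classical equality $h^{0,1}=\lceil b_1/2\rceil$, and finally $h^{2,0}_t$ from Noether's formula $1-h^{0,1}_t+h^{0,2}_t=\chi(\mathcal{O}_{X_t})$, a topological invariant.) Granting it, Theorem \ref{p0} yields that $h^{1,0}_t$ and $h^{2,0}_t$ are independent of $t$.

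Finally, since the Fr\"olicher spectral sequence of each surface $X_t$ degenerates at $E_1$, one has $b_1=h^{1,0}_t+h^{0,1}_t$ and, using Serre duality, $b_2=2h^{2,0}_t+h^{1,1}_t$. Because $b_1$ and $b_2$ are topological hence constant in $t$, and $h^{1,0}_t$, $h^{2,0}_t$ are constant by the previous step, it follows that $h^{0,1}_t=b_1-h^{1,0}_t$ and $h^{1,1}_t=b_2-2h^{2,0}_t$ are constant, and then $h^{2,1}_t=h^{0,1}_t$. Hence all Hodge numbers of $X_t$ are infinitesimal deformation invariant.
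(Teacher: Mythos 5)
Your strategy is the one the paper intends --- feed Theorem \ref{p0} with $p=1,2$ into the classical Hodge theory of surfaces (Serre duality, $E_1$-degeneration, constancy of $b_1,b_2$) --- and several of your reductions are correct and genuinely useful: the Stokes argument for $\mathcal{E}^{2,0}$ (equivalently, closedness of holomorphic $1$-forms), the vacuousness of $\mathcal{E}^{3,0}$, and the observation that the $\p$-closedness demanded in $\mathfrak{D}^{1,1}$ and $\mathfrak{D}^{2,1}$ is automatic. The endgame $h^{0,1}_t=b_1-h^{1,0}_t$, $h^{1,1}_t=b_2-2h^{2,0}_t$ is also fine and is what the citation of \cite[Section IV.2]{ccs} covers.

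The genuine gap sits exactly where you flag it, and flagging it is not closing it: you never prove that an arbitrary compact complex surface lies in $\mathfrak{D}^{1,1}$ and $\mathfrak{D}^{2,1}$, i.e.\ that every $\db$-closed $\p$-exact form of bidegree $(1,1)$ or $(2,1)$ is $\db$-exact. This is the entire content of the hypothesis of Theorem \ref{p0} here; as you yourself note, $E_1$-degeneration controls $[\p\alpha]$ only for $\db$-closed $\alpha$, whereas your $g$ satisfies only $\db\p g=0$, so ``invoking the explicit description of the Dolbeault cohomology of surfaces'' is not an argument. What actually closes it is a duality argument: if $\p g$ is $\db$-closed and $\psi$ is a $d$-closed $(1,1)$- (resp.\ $(0,1)$-) form, then $\int_{X_0}\p g\wedge\psi=\pm\int_{X_0}g\wedge\p\psi=0$ by Stokes, so by nondegeneracy of the Serre pairing it suffices that every class in $H^{1,1}_{\db}$ (resp.\ $H^{0,1}_{\db}$) admit a $d$-closed representative. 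For $H^{0,1}_{\db}$ this follows from $b_1=h^{1,0}+h^{0,1}$ and injectivity of $H^{1,0}\to H^1_{dR}$, which disposes of $\mathfrak{D}^{2,1}$; for $H^{1,1}_{\db}$ it is the surjectivity of $H^{1,1}_{BC}(X_0)\to H^{1,1}_{\db}(X_0)$, which is true for compact surfaces but requires proof --- none of this is in your write-up. Your fallback does not rescue the argument as written: it is circular (you derive $h^{2,0}_t$ both from Theorem \ref{p0} with $p=2$, which still needs $\mathfrak{D}^{2,1}$, and from Noether's formula), and if one runs the purely classical chain ($h^{0,1}=\lceil b_1/2\rceil$, Noether's formula, $E_1$-degeneration) then Theorem \ref{p0} is never used and the statement ceases to be a corollary of it. In fairness, the paper itself offers only the one-line ``by Theorem \ref{p0} and standard Hodge theory,'' deferring these verifications to \cite{RZ14}.
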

For the jumping phenomenon of Hodge numbers we refer to \cite{n,y}.
More generally than Proposition \ref{p0-extension} and Corollary
\ref{0q-extension}, we achieve: \bproposition\label{extension-p,q}
For $\omega \in A^{\star,\star}(X_0)$,
\begin{align*}  &d\lk e^{i_{\varphi}|i_{\overline{\varphi}}}
(\om) \rk\\
=&e^{i_{\varphi}|i_{\overline{\varphi}}} \left( \p \om + \lk \varphi
\overline{\varphi}(\1 - \varphi \overline{\varphi})^{\!\!-1} \rk \lc
\p \om - \p \lk \lk \varphi \overline{\varphi} (\1 - \varphi
\overline{\varphi})^{\!\!-1} \rk \lc \om \rk + \lk \p \lk
\overline{\varphi}( \1 - \varphi \overline{\varphi})^{\!\!-1} \rk
\lc \varphi \rk \lc \om \right. \\
& \quad \quad \quad \quad - \lk \overline{\varphi}(\1 - \varphi
\overline{\varphi})^{\!\!-1} \rk \lc \db \om + \db \lk \lk
\overline{\varphi} ( \1 - \varphi \overline{\varphi})^{\!\!-1} \rk
\lc \om \rk - \lk \db (\1-\varphi \overline{\varphi})^{\!\!-1} \lc \overline{\varphi} \rk \lc \om \\
& \quad \quad \quad +\db \om + \lk \overline{\varphi} \varphi(\1 -
\overline{\varphi} \varphi)^{\!\!-1} \rk \lc \db \om - \db \lk
\lk\overline{\varphi} \varphi (\1 -\overline{\varphi}
\varphi)^{\!\!-1} \rk \lc \om \rk + \lk \db \lk \varphi( \1 -
\overline{\varphi} \varphi)^{\!\!-1} \rk
\lc \overline{\varphi} \rk \lc \om  \\
& \left. \quad \quad \quad \quad - \lk \varphi(\1 -
\overline{\varphi} \varphi)^{\!\!-1} \rk \lc \p \om + \p \lk \lk
\varphi ( \1 - \overline{\varphi} \varphi)^{\!\!-1} \rk \lc \om \rk
- \lk \p (\1- \overline{\varphi} \varphi)^{\!\!-1} \lc \varphi \rk
\lc \om \right).
\end{align*} \eproposition
More details and applications (especially for Proposition
\ref{extension-p,q}) will appear in \cite{RZ14}.

\section{The ideas of proofs}
We shall describe the main ideas in the proofs of Theorems \ref{p0}
and \ref{0q} in this section. Throughout this section, $X_t$ is
assumed to be determined by the integrable Kuranishi data
$\varphi(t)=\sum_{k=1}^\infty t^k\varphi_k$ with \eqref{int}.
Theorem \ref{p0} is obtained by Kodaira-Spencer's upper
semi-continuity theorem and the following iteration procedure.
\begin{proposition}\label{thmp0} Let
$X_0\in \mathfrak{D}^{p,1}\cap \mathcal{E}^{p+1,0}$. Then for any
holomorphic $(p,0)$-form $\sigma_0$ on ${X_0}$, there exits a power
series
$$\sigma_t=\sigma_0+\sum_{k=1}^\infty t^k\sigma_k \in
A^{p,0}(X_0),$$ such that $e^{i_{\varphi(t)}}(\sigma_t)\in
A^{p,0}(X_t)$ is holomorphic with respect to the complex structure
on $X_t$.
\end{proposition}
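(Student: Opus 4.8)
The plan is to construct the power series $\sigma_t = \sum_{k=0}^\infty t^k \sigma_k$ recursively in the powers of $t$, using the extension formula of Proposition \ref{p0-extension} together with Proposition \ref{f-extension} to control the $(p+1,0)$-component and the $(p,1)$-component of $d(e^{i_{\varphi}|i_{\overline{\varphi}}}(\sigma_t))$. By the criterion recalled after Proposition \ref{f-extension}, the form $e^{i_{\varphi(t)}}(\sigma_t)$ is holomorphic on $X_t$ precisely when its exterior derivative is of type $(p+1,0)$ on $X_t$; since $e^{i_{\varphi}|i_{\overline{\varphi}}}$ preserves types and commutes with complex conjugation, this is equivalent to requiring that the bracketed expression in Proposition \ref{p0-extension} be a pure $(p+1,0)$-form on $X_0$, i.e. that its $(p,1)$-component vanish. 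Collecting terms of each type and of each power $t^N$ in $\varphi(t) = \sum t^k \varphi_k$, the vanishing of the $(p,1)$-part at order $t^N$ becomes an equation of the schematic form $\db \sigma_N = \p g_N$, where $g_N \in A^{p,0}(X_0)$ is an explicit expression built from $\varphi_1,\dots,\varphi_N$ and $\sigma_0,\dots,\sigma_{N-1}$ (already constructed), and a similar but lower-order bookkeeping shows $\p g_N$ is automatically $\db$-closed.

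First I would set up the recursion carefully: assume $\sigma_0,\dots,\sigma_{N-1}$ have been chosen so that all obstructions up to order $t^{N-1}$ vanish, and isolate from Proposition \ref{p0-extension} the coefficient of $t^N$ in the $(p,1)$-component. The key structural point is that this coefficient has the form $\db \sigma_N + \p g_N + (\text{$\db$ of lower-order data})$, and by the inductive hypothesis the lower-order $\db$-terms can be absorbed, so one is left needing to solve $\db \sigma_N = \p g_N$ for $\sigma_N \in A^{p,0}(X_0)$. To apply the hypothesis $X_0 \in \mathfrak{D}^{p,1}$ one must verify that $\p g_N$ is $\db$-closed; this follows by applying $\db$ and using $d^2=0$ on the already-constructed partial sum $e^{i_{\varphi}|i_{\overline{\varphi}}}(\sigma_0+\cdots+t^{N-1}\sigma_{N-1})$ together with the order-$t^{N-1}$ vanishing, i.e. it is a formal consequence of the previous steps. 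The condition $\mathfrak{D}^{p,1}$ then supplies a $\p$-closed solution $\sigma_N$ — and the $\p$-closedness is exactly what is needed to make the next step's bookkeeping close, since $g_{N+1}$ will involve $\p \sigma_N$-type terms that we want to control.

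There is a separate and genuinely necessary use of the hypothesis $\mathcal{E}^{p+1,0}$: the $(p+1,0)$-component of $d(e^{i_{\varphi}|i_{\overline{\varphi}}}(\sigma_t))$ need not vanish, but for $e^{i_{\varphi(t)}}(\sigma_t)$ to define a holomorphic form we only need $d$ of it to be of type $(p+1,0)$ on $X_t$ — which is automatic. However, to even \emph{start} the induction and to ensure the candidate $g_N$ lands in a space where $\mathfrak{D}^{p,1}$ can be invoked, one checks that the $(p+1,0)$-part is $\db$-closed and, where needed, solvable; this is where $\mathcal{E}^{p+1,0}$ enters, guaranteeing existence of the relevant primitive so that the $(p,1)$-equation is consistent. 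At $N=0$ the statement is just that $\sigma_0$ is holomorphic on $X_0$, which holds by assumption, so the base case is immediate.

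The main obstacle I anticipate is purely organizational rather than conceptual: correctly extracting, from the rather involved formula in Proposition \ref{p0-extension}, the precise order-$t^N$ contribution to the $(p,1)$-component, separating the genuine unknown $\db \sigma_N$ from the mass of lower-order terms, and verifying that every leftover term is either $\db$-exact (hence absorbable by redefining $\sigma_N$) or $\db$-closed of the form $\p g_N$ (hence handled by $\mathfrak{D}^{p,1}$). In particular one must track how the factors $(\1 - \varphi\overline{\varphi})^{-1}$ and $(\1 - \overline{\varphi}\varphi)^{-1}$ expand as geometric series $\sum_j (\varphi\overline{\varphi})^j$, contributing to all orders in $t$, and confirm that no term obstructs solvability. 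Once the recursion is shown to proceed at every order, the formal power series $\sigma_t$ is defined; its convergence for small $t$ follows from the standard elliptic-estimate arguments of the iteration method as in \cite{LSY,lry}, which I would invoke rather than redo.
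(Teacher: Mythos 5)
Your overall architecture matches the paper's: reduce the holomorphicity of $e^{i_{\varphi(t)}}(\sigma_t)$ via Proposition \ref{p0-extension} to the vanishing of the $(p,1)$-component, expand in powers of $t$, and solve $\db\sigma_k=\p g_k$ at each order using $\mathfrak{D}^{p,1}$ to obtain $\p$-closed solutions. One simplification you miss: since $(\1-\overline{\varphi}\varphi)^{-1}\lc$ is invertible, the vanishing of the $(p,1)$-part is \emph{equivalent} to $\db\sigma_t+\p(\varphi\lc\sigma_t)-\varphi\lc\p\sigma_t=0$, so there is no need to track the geometric-series expansion of $(\1-\overline{\varphi}\varphi)^{-1}$ at all; once every $\sigma_j$ is arranged to be $\p$-closed the recursion is exactly $\db\sigma_k=-\p\lk\sum_{i=1}^k\varphi_i\lc\sigma_{k-i}\rk$.

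There are, however, two genuine gaps. First, the heart of the proof is the verification that the order-$k$ obstruction $\eta_k=-\p\lk\sum_{i=1}^k\varphi_i\lc\sigma_{k-i}\rk$ is $\db$-closed, and this is not the contentless formality you assert: it requires the integrability condition \eqref{int}, i.e.\ $\db\varphi_i=\tfrac{1}{2}\sum_{j=1}^{i-1}[\varphi_j,\varphi_{i-j}]$, combined with the commutator identity $[\phi,\psi]\lc\alpha=-\p(\psi\lc(\phi\lc\alpha))-\psi\lc(\phi\lc\p\alpha)+\phi\lc\p(\psi\lc\alpha)+\psi\lc\p(\phi\lc\alpha)$ and the inductively known $\p\sigma_j=0$, after which the double sum telescopes to zero. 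Your plan never invokes \eqref{int} or this identity, and ``apply $d^2=0$ to the partial sum'' is not yet an argument: the partial sum is not $d$-closed to the relevant order (only its $(p,1)$-part in the $X_t$-structure vanishes), so extracting closedness of $\eta_k$ from $d^2=0$ would need the full formula of Proposition \ref{extension-p,q} and essentially reproduces the same computation. Second, you misidentify the role of $\mathcal{E}^{p+1,0}$: it is used exactly once, to prove $\p\sigma_0=0$ --- solvability of $\db x=\p\sigma_0$ forces $\p\sigma_0=0$ because a solution would have to lie in $A^{p+1,-1}=0$ --- and this base-case $\p$-closedness is indispensable, both to kill the term $\varphi\lc\p\sigma_t$ in the recursion and because $\varphi_j\lc\varphi_{i-j}\lc\p\sigma_0$ enters the closedness computation at every order. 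Your statement that the base case ``is immediate'' and that $\mathcal{E}^{p+1,0}$ merely ``guarantees existence of the relevant primitive for consistency'' misses this mechanism. (For convergence the paper invokes Grauert's formal function theorem rather than elliptic estimates; that difference is cosmetic.)
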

\begin{proof}[Sketch of Proof] By Grauert's formal function theorem \cite{g},
we only need to construct $\sigma_t$ order by order. Proposition
\ref{p0-extension} yields that the holomorphicity of
$e^{i_{\varphi(t)}} \lk \sigma_t \rk$ is equivalent to the
resolution of the equation
$$\overline{\partial}\sigma_t=-\partial(\varphi(t)\lrcorner\sigma_t)+\varphi(t)\lrcorner\partial\sigma_t$$
by the invertibility of the operators
$e^{i_{\varphi(t)}|i_{\overline{\varphi(t)}}}$ and $\lk \1 -
\overline{\varphi(t)} \varphi(t) \rk^{\!\!-1}\lc$.
 By comparing the
coefficients of $t^k$, it suffices to resolve the system of
equations
\begin{equation}\label{holt-1}
\begin{cases}\db \sigma_0=0,\\
\db\sigma_k = -\partial(\sum_{i=1}^k
\varphi_i\lrcorner\sigma_{k-i}),\qquad
\text{for each $k\geq 1$},\\
\p \sigma_k =0,\qquad \text{for each $k\geq 0$}.
\end{cases}
\end{equation}
By $X_0 \in \mathcal{E}^{p+1,0}$, the equation $\db x=\p \sigma_0$
has solutions, which implies $\p \sigma_0=0$ by type consideration.
Let's resolve \eqref{holt-1} inductively. Since $X_0\in
\mathfrak{D}^{p,1}$, our task is to verify
$$\overline{\partial}\partial(\sum_{i=1}^k\varphi_i\lrcorner\sigma_{k-i})=0$$
for $k\geq 1$. Set
$\eta_k=-\partial(\sum_{i=1}^k\varphi_i\lrcorner\sigma_{k-i})$ for
simplicity. For $k=1$, one has
$$\overline{\partial}\eta_1=-\overline{\partial}\partial(\varphi_1\lrcorner\sigma_0)
=\partial(\overline{\partial}\varphi_1\lrcorner\sigma_0 +
\varphi_1\lrcorner\overline{\partial}\sigma_0)=0,$$ since
$\db\varphi_1=0$ by (\ref{int}) and $\overline{\partial}\sigma_0=0$.
Thus $\sigma_1$ is got by $X_0\in \mathfrak{D}^{p,1}$. By induction,
we assume that \eqref{holt-1} is solved for all $k\leq l$ and thus
have ${\partial}\sigma_k=0$ for $0\leq k\leq l$. By $X_0\in
\mathfrak{D}^{p,1}$, we only need to show
$\overline{\partial}\eta_{l+1}=0$. We resort to a useful commutative
formula (cf. \cite{T,To89,BK,F,C,Li,LR,lry}) on a complex manifold
$X$. For $\phi, \psi\in A^{0,1}(X,T^{1,0}_{X})$ and $\alpha\in
A^{*,*}(X)$,
\[
[\phi,\psi]\lrcorner\alpha=-\p(\psi\lrcorner(\phi
\lrcorner\alpha))-\psi\lrcorner(\phi \lrcorner\p\alpha)
+\phi\lrcorner\p(\psi\lrcorner\alpha)+\psi
\lrcorner\p(\phi\lrcorner\alpha).\] Hence, by this formula and
\eqref{int}, one has
\begin{align*}
\overline{\partial}\eta_{l+1}
 &
 =\partial\lk\sum_{i=2}^{l+1}\overline{\partial}\varphi_i\lrcorner\sigma_{l+1-i}+\sum_{i=1}^{l}\varphi_i\lrcorner\overline{\partial}\sigma_{l+1-i}\rk\\
 &=\partial\lk\frac{1}{2}\sum_{i=2}^{l+1}\sum_{j=1}^{i-1}[\varphi_j,\varphi_{i-j}]\lrcorner\sigma_{l+1-i}-\sum_{i=1}^{l}\varphi_i\lrcorner{\partial}\lk\sum_{j=1}^{l+1-i}\varphi_{j}\lrcorner\sigma_{l+1-i-j}\rk\rk\\
 &=\partial\left(\frac{1}{2}\sum_{i=2}^{l+1}\sum_{j=1}^{i-1}\Big(-\partial\lk\varphi_{j}\lrcorner\lk\varphi_{i-j}\lrcorner\sigma_{l+1-i}\rk\rk
 -\varphi_{j}\lrcorner\varphi_{i-j}\lrcorner\partial\sigma_{l+1-i}\right. \\
  &\qquad \qquad \qquad +\varphi_j\lrcorner\partial\lk\varphi_{i-j}\lrcorner\sigma_{l+1-i}\rk+\varphi_{i-j}\lrcorner\partial\lk\varphi_j\lrcorner\sigma_{l+1-i}\rk\Big)
 \\ &\quad\quad \quad -\sum_{i=1}^{l}\varphi_{i}\lrcorner\partial \left.\lk\sum_{j=1}^{l+1-i}\varphi_j\lrcorner\sigma_{l+1-i-j}\rk \right)\\
 &=\partial\lk\sum_{1\leq j<i\leq
 l+1}\varphi_j\lrcorner\partial\lk\varphi_{i-j}\lrcorner\sigma_{l+1-i}\rk-\sum_{i=1}^{l}\sum_{j=1}^{l+1-i}\varphi_{i}\lrcorner\partial\lk\varphi_j\lrcorner\sigma_{l+1-i-j}\rk\rk\\
 &=0.
\end{align*}
\end{proof}

The proof of Theorem \ref{0q} is a bit different from that of
Theorem \ref{p0} and we need:
\begin{lemma}[\cite{p2}, {Lemma 3.1}]\label{lemma1}
Each Dolbeault class $[\alpha]$ of type $(p,q)$ on a complex
manifold  $X\in \mathfrak{B}^{p+1,q}$ can be represented by a
$d$-closed $(p,q)$-form $\gamma_{\alpha}$.
\end{lemma}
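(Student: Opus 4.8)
The plan is to take any $\db$-closed representative $\alpha\in A^{p,q}(X)$ of the given Dolbeault class and to modify it by a $\db$-exact term so that the result becomes $\p$-closed as well, hence $d$-closed. Concretely, I would search for $\beta\in A^{p,q-1}(X)$ such that $\gamma_\alpha:=\alpha+\db\beta$ satisfies $\p\gamma_\alpha=0$; since $\db\gamma_\alpha=\db\alpha=0$ holds automatically and $\gamma_\alpha-\alpha=\db\beta$ is $\db$-exact, such a $\gamma_\alpha$ is then a $d$-closed form representing the same Dolbeault class.

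The first step is to notice that $\p\alpha\in A^{p+1,q}(X)$ is $\db$-closed, because $\db(\p\alpha)=-\p(\db\alpha)=0$, and that it is by construction of the form $\p g$ with $g=\alpha\in A^{p,q}(X)$. Therefore the hypothesis $X\in\mathfrak{B}^{p+1,q}$ applies and yields a $\p$-exact solution of the equation $\db x=\p\alpha$; write such a solution as $x=\p\beta$ with $\beta\in A^{p,q-1}(X)$.

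The second step is the routine bookkeeping. With this $\beta$ put $\gamma_\alpha:=\alpha+\db\beta$. Then $\db\gamma_\alpha=0$ trivially, while $\p\gamma_\alpha=\p\alpha+\p\db\beta=\p\alpha-\db(\p\beta)=\p\alpha-\db x=0$, where I use the anticommutation $\p\db=-\db\p$ and the defining relation $\db x=\p\alpha$. Hence $d\gamma_\alpha=\p\gamma_\alpha+\db\gamma_\alpha=0$ and $[\gamma_\alpha]=[\alpha]$ in Dolbeault cohomology, as desired. (When $q=0$ the space $A^{p,q-1}(X)$ is zero, so the hypothesis forces $\p\alpha=0$ and one simply takes $\gamma_\alpha=\alpha$.)

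I do not expect a genuine obstacle here: the entire content is the observation that $\p\alpha$ is a $\db$-closed $(p+1,q)$-form lying in the image of $\p$, which is exactly the kind of datum that membership in $\mathfrak{B}^{p+1,q}$ is designed to handle. The only point demanding a moment's attention is the sign in $\p\db\beta=-\db\p\beta$, which is what makes the correction term cancel $\p\alpha$ rather than reinforce it.
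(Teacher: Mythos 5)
Your proof is correct and is essentially the standard argument behind the cited result of Popovici: applying the defining property of $\mathfrak{B}^{p+1,q}$ to the $\db$-closed form $\p\alpha$ to get a $\p$-exact solution $x=\p\beta$, and then correcting $\alpha$ by $\db\beta$. The paper itself only cites \cite{p2} for this lemma, and your reconstruction (including the sign bookkeeping and the degenerate case $q=0$) matches that approach.
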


\begin{lemma}\label{lemma2}
Let $\gamma_{\alpha_1}$ and $\gamma_{\alpha_2}$ be two $d$-closed
representatives  of the same Dolbeault class $[\alpha_1]=[\alpha_2]$
as in the above lemma on $X\in \mathcal{E}^{q,0} \cap
\mathfrak{B}^{1,q} $. Then $\gamma_{\alpha_1}=\gamma_{\alpha_2}.$
\end{lemma}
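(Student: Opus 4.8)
The plan is to show that the difference $\gamma := \gamma_{\alpha_1} - \gamma_{\alpha_2}$ vanishes, using that it is a $d$-closed $(p,q)$-form which is also $\db$-exact (since $[\alpha_1] = [\alpha_2]$ in Dolbeault cohomology). So I would start from $\gamma = \db\beta$ for some $\beta \in A^{p,q-1}(X)$, together with $d\gamma = 0$, which splits into $\db\gamma = 0$ (automatic) and $\p\gamma = 0$. The task is then to upgrade the $\db$-exactness to genuine vanishing.

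\medskip

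The key step is to exploit $\p\gamma = 0$ together with $\gamma = \db\beta$. Since $\p\db\beta = 0$, the form $\p\beta$ is a $\db$-closed form of type $(p+1, q-1)$, and moreover $\p\beta = \p g$ is manifestly $\p$-exact, hence the hypothesis $X \in \mathfrak{B}^{1,q}$ (applied in the appropriate bidegree, or rather the $\p\db$-type condition packaged in $\mathfrak{B}$) should let me solve $\db x = \p\beta$ with a $\p$-exact solution $x = \p u$. Wait — here I must be careful about bidegrees: the relevant instance is $\mathfrak{B}^{p+1,q-1}$ in the notation of the definition, so strictly I would invoke whatever form of the $\mathfrak{B}$-hypothesis Lemma \ref{lemma1} already used, namely that $d$-closed representatives exist; the point is that $\gamma$ lies in the image of the construction of Lemma \ref{lemma1} and that construction is \emph{canonical} once one fixes the class. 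Concretely, Lemma \ref{lemma1} produces $\gamma_\alpha$ by correcting a given representative $\alpha$ of $[\alpha]$ by a $\db$-exact term so as to kill the $\p$-component; the freedom in that correction is exactly a $\p$-closed, $\db$-exact $(p,q)$-form, and I must show such a form is forced to be zero under the extra hypotheses $X \in \mathcal{E}^{q,0} \cap \mathfrak{B}^{1,q}$.

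\medskip

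So the heart of the argument: let $\gamma = \db\beta$ with $\p\gamma = 0$. Then $\db(\p\beta) = -\p(\db\beta) = -\p\gamma = 0$, so $\p\beta$ represents a Dolbeault class of type $(p+1, q-1)$; but $\p\beta$ is $\p$-exact, so by $X \in \mathfrak{B}^{p+1, q-1}$ (this being the instance of the hypothesis actually needed — if the statement only lists $\mathfrak{B}^{1,q}$ then the relevant case is $q = 1$, $p$ arbitrary, where $p+1, q-1 = (p+1, 0)$ and the condition $\mathcal{E}^{q,0} = \mathcal{E}^{1,0}$ enters) one can solve $\db(\p\beta')$... Let me instead argue by descending on $q$. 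For $q = 1$: $\gamma = \db\beta$ with $\beta \in A^{p,0}$ and $\p\gamma = 0$. Then $\db\p\beta = 0$, and $\p\beta \in A^{p+1,0}$ is $\db$-closed and $\p$-exact; invoking $X \in \mathcal{E}^{p+1,0}$ (rather, the listed $\mathcal{E}^{q,0}$ with the bidegree understood) forces, by type reasons as in the proof of Proposition \ref{thmp0}, $\p\beta = 0$, so $\beta$ is holomorphic, i.e.\ $\db\beta = \gamma = 0$. For general $q$ one iterates: $\p\beta$ is $\db$-closed of type $(p+1,q-1)$ and $\p$-exact, so by $\mathfrak{B}^{1,q}$ (in the appropriate degree, as used in Lemma \ref{lemma1}) one writes $\p\beta = \db\theta$ with $\theta$ $\p$-exact, $\theta = \p\mu$; then $\beta - \db\mu$... hmm, this replaces $\beta$ by a $\p$-closed primitive, and $\gamma = \db\beta = \db(\beta - \db\mu)$ is unchanged, with the new primitive $\p$-closed — then a further application or a Hodge-theoretic input kills it.

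\medskip

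The main obstacle I anticipate is precisely pinning down \emph{which} bidegrees of the hypotheses $\mathcal{E}^{q,0} \cap \mathfrak{B}^{1,q}$ are invoked and checking the type-consideration step ($\db$-closed $+$ $\p$-exact $\Rightarrow$ zero in bidegree $(\star,0)$) does the job cleanly; the recursion in $q$ must be organized so that at each stage the ``leftover'' ambiguity is a $\p$-closed \emph{and} $\db$-exact form of strictly lower anti-holomorphic degree, terminating at degree $0$ where $\mathcal{E}^{q,0}$ forces vanishing by type. Modulo getting that bookkeeping right, the algebra is just repeated use of $\p\db = -\db\p$ and the defining properties of $\mathfrak{B}$ and $\mathcal{E}$, so I would not expect further surprises.
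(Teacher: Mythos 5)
There is a genuine gap, and it is concentrated in one missing idea: complex conjugation. Note first that, despite the phrase ``as in the above lemma,'' the hypothesis $\mathfrak{B}^{1,q}$ pins down $p=0$, so the lemma concerns $(0,q)$-classes and the primitive $\beta$ with $\gamma_{\alpha_2}-\gamma_{\alpha_1}=\db\beta$ lives in $A^{0,q-1}(X)$. The paper's proof is then a single step: $d$-closedness gives $\p\db\beta=0$, hence by conjugation $\db(\p\bar{\beta})=0$ with $\p\bar{\beta}\in A^{q,0}(X)$; the hypothesis $X\in\mathcal{E}^{q,0}$ says $\db x=\p\bar{\beta}$ is solvable, but $x$ would have to be a $(q,-1)$-form, so $\p\bar{\beta}=0$ by type, and conjugating back gives $\db\beta=\overline{\p\bar{\beta}}=0$, i.e.\ $\gamma_{\alpha_1}=\gamma_{\alpha_2}$. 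Your argument never conjugates, and this is fatal: you apply the type consideration to $\p\beta$ rather than to $\p\bar{\beta}$, and the conclusion you draw in your base case --- ``$\p\beta=0$, so $\beta$ is holomorphic, i.e.\ $\db\beta=0$'' --- is a non sequitur: $\p\beta=0$ says nothing about $\db\beta$ (at best, for $\beta$ a function on a \emph{compact} manifold it says $\beta$ is \emph{anti}-holomorphic, hence constant, but that rescue is not available in higher degree and is not what the hypotheses are designed to provide).

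For general $q$ your descending recursion also does not close. Using a $\mathfrak{B}$-condition in bidegree $(1,q-1)$ (which, incidentally, is not among the stated hypotheses) you can at best replace $\beta$ by a primitive $\beta'$ with $\p\beta'=0$ and $\db\beta'=\gamma$ unchanged; you are then left needing ``$\p$-closed $(0,q-1)$-form $\Rightarrow$ $\db$-closed,'' which is false, and your appeal to ``a further application or a Hodge-theoretic input'' is precisely the missing step. The conjugation trick is what transports the problem into bidegree $(q,0)$, where the absence of $(q,-1)$-forms makes $\mathcal{E}^{q,0}$ bite on $\db\beta$ itself; without it the stated hypotheses cannot be brought to bear.
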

\begin{proof}
From $\gamma_{\alpha_i}=\alpha_i+\db \beta_{\alpha_i},\, i=1,2,$
there exists some $\beta \in A^{0,q-1}(X)$ such that
$$\gamma_{\alpha_2}-\gamma_{\alpha_1} = \db\beta.$$
Since $\gamma_{\alpha_1},\gamma_{\alpha_2}$ are $d$-closed, we have
$\p\db\beta=0$. Hence, by $X \in \mathcal{E}^{q,0}$, the equation
\[ \db x = \p\bar{\beta} \]
has solutions. From type consideration, $\p \bar{\beta}=0$, which
implies $\gamma_{\alpha_1}=\gamma_{\alpha_2}$.
\end{proof}

We shall construct a correspondence from $H^{0,q}(X_0)$ to
$H^{0,q}(X_t)$ by sending $[\alpha]\in H^{0,q}(X_0)$ to
$[e^{i_{\overline{\varphi}}}(\gamma_{\alpha}(t))]\in H^{0,q}(X_t)$,
where
$$\gamma_{\alpha}(t)=\gamma_{\alpha}+\sum_{k=1}^\infty
\gamma_{\alpha}^k \bar{t}^k\in A^{0,q}(X_0).$$ Here
$\gamma_{\alpha}$ is uniquely determined by the Dolbeault class
$[\alpha]$ from the above two lemmas. To guarantee that this
correspondence can not send a nonzero class in $H^{0,q}(X_0)$ to a
zero class in $H^{0,q}(X_t)$, one needs $h^{0,q-1}_t=h^{0,q-1}_0$.
Therefore, for each $1\leq q\leq n$, we use induction to reduce
Theorem \ref{0q} to the following proposition  with all $1\leq
q'\leq q$.
\begin{proposition}\label{thm0q}
Let
$X_0\in\mathfrak{B}^{1,q'}\cap\mathcal{E}^{q',0}\cap\mathfrak{D}^{q',1}$.
Then for any $d$-closed $(0,q')$-form $\sigma_0$ on ${X_0}$, there
exits a power series on $X_0$
$$\sigma_t=\sigma_0+\sum_{k=1}^\infty \bar{t}^k\sigma_k\in A^{0,q'}(X_0)$$ such that
$e^{i_{\overline{\varphi(t)}}}(\sigma_t)\in A^{0,q'}(X_t)$ is
$\db_t$-closed with respect to the complex structure on $X_t$.

\end{proposition}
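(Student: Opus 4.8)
\medskip
\noindent\emph{Proof proposal.}
The plan is to imitate the proof of Proposition \ref{thmp0}, but for $(0,q')$-forms, the operator $e^{i_{\overline{\varphi(t)}}}$, and the conjugate extension formula. By Grauert's formal function theorem \cite{g} it suffices to construct the power series $\sigma_t=\sigma_0+\sum_{k\geq1}\bar t^k\sigma_k$ order by order in $\bar t$. For the reduction I would apply Corollary \ref{0q-extension} (which is precisely the conjugate of Proposition \ref{p0-extension}) with Kuranishi data $\varphi(t)$ and $\om=\sigma_t$, and then note that \emph{all four} term-groups on its right-hand side are annihilated once $\sigma_t$ satisfies the two conditions
\[
(\ast)\quad \db\sigma_t=0,\qquad\qquad (\ast\ast)\quad \p\sigma_t+\db\big(\overline{\varphi(t)}\lc\sigma_t\big)=0 :
\]
under $(\ast)$ the summands carrying a factor $\db\sigma_t$ (including $\overline{\varphi(t)}\lc\db\sigma_t$) vanish, and under $(\ast\ast)$ the two summands carrying the factor $\p\sigma_t+\db(\overline{\varphi(t)}\lc\sigma_t)$ vanish. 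Hence $(\ast)$ and $(\ast\ast)$ force $d\big(e^{i_{\overline{\varphi(t)}}}(\sigma_t)\big)=0$, so in particular $e^{i_{\overline{\varphi(t)}}}(\sigma_t)$ is $\db_t$-closed. Observe that $(\ast)$ and $(\ast\ast)$ involve only $\sigma_t$ and $\overline{\varphi(t)}=\sum_{i\geq1}\bar t^i\overline{\varphi_i}$, both genuine power series in $\bar t$, so no mixing of $t$ and $\bar t$ occurs.

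It remains to solve $(\ast)$ and $(\ast\ast)$ recursively. Comparing coefficients of $\bar t^k$ gives
\[
\db\sigma_k=0,\qquad \p\sigma_k=-\,\db\Big(\sum_{i=1}^{k}\overline{\varphi_i}\lc\sigma_{k-i}\Big)\qquad(k\geq0),
\]
which hold at $k=0$ since $\sigma_0$ is $d$-closed. Assuming $\sigma_0,\dots,\sigma_{k-1}$ built with these properties, taking conjugates reduces the construction of $\sigma_k$ to finding $\overline{\sigma_k}\in A^{q',0}(X_0)$ with $\p\overline{\sigma_k}=0$ and $\db\,\overline{\sigma_k}=\p\big(-\sum_{i=1}^{k}\varphi_i\lc\overline{\sigma_{k-i}}\big)$, whose right-hand side is $\p$ of a $(q'-1,1)$-form; thus $X_0\in\mathfrak{D}^{q',1}$ provides a $\p$-closed solution, provided that right-hand side is $\db$-closed. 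Conjugating back, the needed consistency is $\db\,\p\big(\sum_{i=1}^{k}\overline{\varphi_i}\lc\sigma_{k-i}\big)=0$. The roles of $\mathfrak{B}^{1,q'}$ and $\mathcal{E}^{q',0}$ are the same as in Lemmas \ref{lemma1} and \ref{lemma2} and in the type argument of Proposition \ref{thmp0} (pinning down $\sigma_0$ and keeping the successive choices compatible with the correspondence discussed above the Proposition).

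The main obstacle is exactly this consistency identity $\db\,\p\big(\sum_{i=1}^{k}\overline{\varphi_i}\lc\sigma_{k-i}\big)=0$. Formally it is the conjugate of the long computation closing the proof of Proposition \ref{thmp0}: one expands each $\db(\overline{\varphi_i}\lc\sigma_{k-i})$ using the conjugated bracket identity $[\overline\phi,\overline\psi]\lc\alpha=-\db(\overline\psi\lc(\overline\phi\lc\alpha))-\overline\psi\lc(\overline\phi\lc\db\alpha)+\overline\phi\lc\db(\overline\psi\lc\alpha)+\overline\psi\lc\db(\overline\phi\lc\alpha)$, substitutes the integrability \eqref{int}, uses the inductive hypothesis $\db\sigma_j=0$ for $j<k$, and watches the sum telescope to $0$; the care required is in the signs coming from the extra contraction indices in the $(0,\ast)$ setting. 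A secondary point one must not skip is the very first step — checking that $(\ast)$ and $(\ast\ast)$ kill \emph{every} term-group of Corollary \ref{0q-extension}, not merely its $(0,q'+1)$-part — so that the extension obtained is genuinely $d$-closed.

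\medskip
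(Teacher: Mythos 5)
Your proposal is correct and follows essentially the same route as the paper: reduce via Corollary \ref{0q-extension} to the system $\db\sigma_t=0$, $\p\sigma_t+\db(\overline{\varphi(t)}\lc\sigma_t)=0$, compare coefficients of $\bar t^k$, conjugate to the system in $\overline{\sigma_k}\in A^{q',0}$ solved by $X_0\in\mathfrak{D}^{q',1}$, and verify the $\db$-closedness of the right-hand side by the conjugate of the telescoping computation in Proposition \ref{thmp0}. Your identification of which hypotheses drive which step (and the observation that the system actually forces $d$-closedness of the extension) matches, and even slightly sharpens, the paper's own sketch.
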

\begin{proof}[Sketch of Proof]
By Corollary \ref{0q-extension}, the invertibility of the operators
$e^{i_{\varphi(t)}|i_{\overline{\varphi(t)}}}$ yields that the
desired $\db_t$-closed condition is equivalent to the resolution of
the equation

$$\lk \1 - \overline{\varphi(t)}\varphi(t)
   \rk^{\!\!-1}\lc\db\sigma_t-q'\db
 \sigma_t-\lk \lk \1 -\overline{\varphi(t)} \varphi(t)
 \rk^{\!\!-1}\lc{\varphi(t)}\rk\lc(\p  \sigma_t+\db
 (\overline{\varphi(t)}\lc\sigma_t))=0.$$
By comparing the coefficients of $\bar{t}^k$, it suffices to resolve
the  system of equations
\[ \begin{cases}\overline{\partial}\sigma_t=0,\\
\p  \sigma_t+\db
 (\overline{\varphi(t)}\lc\sigma_t)=0,
\end{cases}\]
or equivalently, by conjugation,
\begin{equation}\label{holt-2}
\begin{cases}d\sigma_0=0,\\
\overline{\partial}\overline{\sigma_k}=-\partial(\sum_{i=1}^k\varphi_i\lrcorner\overline{\sigma_{k-i}}),\qquad
\text{for each $k\geq 1$},\\
{\partial}\overline{\sigma_k}=0,\qquad \text{for each $k\geq 1$}.
\end{cases}
\end{equation}
Hence, analogously to the proof of Proposition \ref{thmp0}, we are
able to resolve \eqref{holt-2} inductively by the assumption on
$X_0$ and Lemmata \ref{lemma1}, \ref{lemma2}.
\end{proof}

\textbf{Acknowledgement}: We would like to thank Prof. K. Liu for
everything related to this work and our mathematical growth, and
also Dr. Jie Tu for many helpful discussions. This work started when
the second author was invited by Prof. J.-A. Chen to Taiwan
University in May-July 2013 with the support of the National Center
for Theoretical Sciences. Rao is also supported by the National
Natural Science Foundation of China, No. 11301477 and China
Scholarship Council/University of California, Los Angeles Joint
Scholarship Program.

\vspace{1cm}

\end{document}